\title{{\bfseries Torsion of rational elliptic curves over different types of cubic fields}}
\author{Daeyeol Jeon and Andreas Schweizer}
\date{}
\address{Daeyeol Jeon, 
Department of Mathematics Education, 
Kongju National University, 
Gongju, Chungnam 314-701, South Korea}
\email{dyjeon@kongju.ac.kr}
\address{Andreas Schweizer,
Department of Mathematics, 
Korea Advanced Institute of Science and Technology (KAIST), 
Daejeon 305-701, South Korea}
\email{schweizer@kaist.ac.kr}
\newcommand{\comment}[1]{} 
\DeclareSymbolFont{text}{OT1}{\rmdefault}{m}{n}
\DeclareMathSymbol{\amp}{\mathbin}{text}{"26}
\newtheorem{thm}{Theorem}[section]
\newtheorem{lemma}[thm]{Lemma}
\theoremstyle{definition}
\newcommand{\Q}{\mathbb Q}
\newcommand{\Z}{\mathbb Z}
\begin{document}
{\thanks{\scriptsize This work was supported by the Basic Science Research Program through the National Research Foundation of Korea (NRF) funded by the Ministry of Education(NRF-2016R1D1A1B03934504).}}

\maketitle

\noindent
\begin{abstract}
Let $E$ be an elliptic curve defined over $\Q$, and let $G$ be the torsion group $E(K)_{tors}$ for some cubic field $K$ which does not occur over $\Q$.
In this paper, we determine over which types of cubic number fields (cyclic cubic, non-Galois totally real cubic, complex cubic or pure cubic) $G$ can occur, and if so, whether it can occur infinitely often or not.
Moreover, if it occurs, we provide elliptic curves $E/\Q$ together with cubic fields $K$ so that $G= E(K)_{tors}$.

\end{abstract}

\noindent
{\bf Key Words:} elliptic curve, modular curve, torsion subgroup, cubic field. \\
{\it 2010 Mathematics Subject Classification}. {Primary: 11G05;
Secondary: 11G18}.

\section{Introduction}\label{sec:Introduction}
A celebrated theorem, finally proved by Mazur \cite{M}, states that the torsion group $E(\Q)_{tors}$ of an elliptic curve $E$ over the rational numbers must be isomorphic to one of the following 15 types:
\begin{equation}\label{eq:rt}
\begin{array}{ll}
\Z/n\Z,&n=1,2,3,\dots,10,12\\
\Z/2\Z\times\Z/2n\Z,&n=1,2,3,4
\end{array}
\end{equation}

Let $E$ be an elliptic curve over $\Q$, and $K$ be a cubic number field.
Najman \cite{N} determined that $E(K)_{tors}$ is one of the following 20 types:
\begin{equation}\label{eq:ct}
\begin{array}{ll}
\Z/n\Z,&n=1,2,3,\dots,10,12,13,14,18,21\\
\Z/2\Z\times\Z/2n\Z,&n=1,2,3,4,7
\end{array}
\end{equation}
Moreover, he showed that the elliptic curve 162B1 over $\Q(\zeta_9)^+$ is the unique rational elliptic curve
with torsion $\Z/21\Z$ over a cubic field, and for all the other groups $G$ in the list \eqref{eq:ct}, there exist infinitely many rational elliptic curves that have torsion $G$ over some cubic field.


Later, Gonz\'alez-Jim\'enez, Najman and Tornero \cite{GNT} determined the set of possible torsion structures over a cubic field of a rational elliptic curve such that $E(\Q)_{tors}= G$ for each $G$ listed in \eqref{eq:rt}. Also, they studied the number of cubic fields $K$ such that $E(\Q)_{tors}\neq E(K)_{tors}$.

Recently, Derickx and Najman \cite{DN} determined all the possible torsion groups of elliptic curves over cyclic cubic fields, over non-cyclic totally real cubic fields and over complex cubic fields.
Also Gonz\'alez-Jim\'enez \cite{G} gave an explicit description of the possible torsion growth of rational elliptic curves with complex multiplication over cubic fields.


Let $K$ be a cubic number field.
Then $K=\Q(\alpha)$ for some $\alpha$ whose minimal polynomial is a cubic polynomial $f(x)$.
If all three roots of $f(x)$ are real, $K$ is called a {\it totally real cubic field}, and if $f(x)$ has a non-real root, it is called {\it complex cubic field}.
Moreover, if $K$ contains all three roots of $f(x)$, i.e., $K$ is a Galois extension of $\Q$, then $K$ is called a {\it cyclic cubic field}. Indeed, a cyclic cubic field must be a totally real cubic field.
Finally, if $K$ can be obtained by adjoining the cube root $\sqrt[3]{n}$ of a positive integer $n$, then $K$ is called a {\it pure cubic field}.

In this paper, we determine whether a torsion group $G$ not occurring over $\Q$ can occur over which types of cubic fields, and if so, whether it can occur infinitely often or not.
Moreover, if it occurs, we provide elliptic curves $E$ together with cubic fields $K$ so that $G= E(K)_{tors}$.

Finally, we note that $K$-{\it rational} means defined over a field $K$, and {\it rational} without $K$ means $\Q$-rational.

\section{Results}

In this section, for each torsion group $G$ not occurring over $\Q$, we give results on it case by case.
However we don't need to treat the case $E(K)_{tors}=\Z/21\Z$ as stated in the Introduction.

Similar to Cremona and Watkins \cite{CW}, for a positive integer $N$, we will describe models for elliptic curves with $N$-isogenies as 
$$y^2 = x^3 + A_N(t,U)x + B_N(t,U)$$
where $A_N(t,U)=f(t)U^2$ and $B_N(t,U)=g(t)U^3$, $t$ and $U$ are parameters and $f$ and $g$ are functions of $t$.
A fixed value of $t$ corresponds to a family of quadratic twists of an elliptic curve (all with the same $j$-invariant), a fixed value of $t$ and a value of $U$ up to squares defines an elliptic curve, 
and a $(t,U)$-pair defines a model.

\begin{lemma} \label{lem:GNT} {}\
\begin{itemize} 	
\item[(a)] \cite[Lemma 2.5]{GNT}\label{lem:GNT} Let $p$ be prime, $f$ a $p$-isogeny on $E$ over $\Q$, and let ${\rm ker}(f)$ be generated by $P$.
Then the field of definition $\Q(P)$ of $P$ (and all of its multiples) is a cyclic (Galois) extension of $\Q$ of order dividing $p-1$.
\item[(b)] \cite[Lemma 18]{N} Let $E$ be an elliptic curve over $\Q$ and $K$ a cubic number field. If $P$ is a $K$-rational $n$-torsion point
of $E$ where $n$ is odd and not divisible by $3$, then $P$ generates a $\Q$-rational $n$-isogeny of $E$.
\end{itemize} 
\end{lemma}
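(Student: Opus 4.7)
For part (a) the argument is Galois-theoretic. Since $f$ is defined over $\Q$, the cyclic subgroup $\langle P\rangle = \ker(f)$ is stable under $G_\Q := \mathrm{Gal}(\overline{\Q}/\Q)$, so every $\sigma \in G_\Q$ satisfies $\sigma(P) = \chi(\sigma)\,P$ for a unique $\chi(\sigma) \in (\Z/p\Z)^*$, and the assignment $\chi \colon G_\Q \to (\Z/p\Z)^*$ is a character. Its kernel fixes exactly $\Q(P)$, so $\Q(P)/\Q$ is abelian with Galois group embedded in $(\Z/p\Z)^* \cong \Z/(p-1)\Z$, hence cyclic of order dividing $p-1$. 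Every nontrivial multiple $kP$ lies in the same subgroup $\langle P\rangle$, so the same character governs it and the same conclusion (or the one for a subfield of $\Q(P)$) applies.

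For part (b) it suffices to show $\langle P\rangle$ is $G_\Q$-stable: then $E \to E/\langle P\rangle$ is a $\Q$-rational $n$-isogeny generated by $P$. The Galois orbit of $P$ has size dividing $[K:\Q] = 3$, hence is $1$ (in which case $P \in E(\Q)$ and we are done) or $3$. Assume size $3$ with conjugates $P_1 = P, P_2, P_3$, and set $H := \langle P_1, P_2, P_3\rangle \subseteq E[n]$; this is automatically $G_\Q$-stable, and the task reduces to proving $H = \langle P_1\rangle$.

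Suppose to the contrary that $H \supsetneq \langle P_1\rangle$. Decomposing into primary components, $H = \bigoplus_{p \mid n} H_p$ and $\langle P_1\rangle = \bigoplus_{p \mid n} \langle P_1\rangle_p$, so there is a prime $p \mid n$ with $H_p \supsetneq \langle P_1\rangle_p$. In particular $H_p$ is non-cyclic; since $H_p \subseteq E[p^{v_p(n)}] \cong (\Z/p^{v_p(n)}\Z)^2$, any non-cyclic subgroup must have $p$-torsion of $\mathbb{F}_p$-rank two, so $E[p] \subseteq H_p \subseteq H$. Letting $L$ be the Galois closure of $K/\Q$ one obtains $E[p] \subseteq E(L)$, whence $\Q(\zeta_p) \subseteq L$ by the Weil pairing. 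If $K/\Q$ is cyclic cubic, $[L:\Q] = 3$ forces $p-1 \mid 3$, impossible for $p > 3$. If $K/\Q$ is non-Galois cubic, $\mathrm{Gal}(L/\Q) \cong S_3$, and the cyclic group $\mathrm{Gal}(\Q(\zeta_p)/\Q)$ of order $p-1$ must be a quotient of $S_3^{\mathrm{ab}} \cong \Z/2\Z$, forcing $p-1 \mid 2$, again impossible. Since $\gcd(n,6) = 1$ makes every prime divisor of $n$ exceed $3$, this is the desired contradiction.

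The main obstacle is the passage from ``$H_p$ non-cyclic'' to ``$E[p] \subseteq H_p$'', which rests on the elementary but crucial fact that a non-cyclic subgroup of $(\Z/p^k\Z)^2$ must contain the full $p$-torsion. After that, inserting $\Q(\zeta_p)$ into $L$ via the Weil pairing and carrying out the small case analysis of the two possible Galois closures (cyclic $\Z/3\Z$, or $S_3$) is routine.
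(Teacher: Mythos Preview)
The paper does not actually prove this lemma; both parts are quoted from external references (\cite[Lemma~2.5]{GNT} and \cite[Lemma~18]{N}) and stated without argument. So there is no ``paper's own proof'' to compare against, and your proposal stands on its own merits.

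Your argument is correct. Part~(a) is the standard character argument: once $\langle P\rangle$ is $G_\Q$-stable, the action factors through $\mathrm{Aut}(\Z/p\Z)=(\Z/p\Z)^*$, and the fixed field of the kernel is exactly $\Q(P)$. (A small sharpening: for $1\le k\le p-1$ one has $\Q(kP)=\Q(P)$ on the nose, not merely a subfield, since $\sigma(kP)=kP$ iff $\chi(\sigma)=1$.) For part~(b), your orbit-size bound, the primary decomposition, and the reduction to ``$H_p$ non-cyclic $\Rightarrow E[p]\subseteq H_p$'' are all sound; the last step follows because a finite abelian $p$-group is cyclic iff its $p$-torsion has rank one, and here the ambient $p$-torsion has rank exactly two. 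The Weil-pairing step then places $\Q(\zeta_p)$ inside the Galois closure $L$, and the two cases $[L:\Q]=3$ or $\mathrm{Gal}(L/\Q)\cong S_3$ both force $p-1\le 3$, contradicting $\gcd(n,6)=1$. This is essentially the argument Najman gives in \cite{N}, so you have reconstructed the cited proof.
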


\begin{center}
2.1.\,$E(K)_{tors}=\Z/13\Z$
\end{center}

 
\begin{lemma}\label{lem:13} 
Suppose $E$ is a rational elliptic curve with $E(K)_{tors}=\Z/13\Z$ for some cubic field $K$.
Then $K$ is a cyclic cubic field.
\end{lemma}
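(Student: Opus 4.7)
The plan is to combine the two parts of Lemma \ref{lem:GNT} to force the field of definition of the $13$-torsion point to be cyclic Galois over $\Q$, and then use Mazur's theorem to pin it down as $K$ itself.

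First I would take a generator $P$ of $E(K)_{tors} = \Z/13\Z$. Since $13$ is odd and not divisible by $3$, part (b) of Lemma \ref{lem:GNT} applies and tells me that the subgroup $\langle P \rangle$ is the kernel of a $\Q$-rational $13$-isogeny of $E$. This is the crucial input that turns a point defined over a cubic field into a $\Q$-rational isogeny structure.

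Next I would feed this $\Q$-rational $13$-isogeny into part (a) of Lemma \ref{lem:GNT}. That lemma then asserts that $\Q(P)$ is a cyclic (Galois) extension of $\Q$ whose degree divides $13-1=12$. Since $P\in E(K)$, I have the tower $\Q\subseteq \Q(P)\subseteq K$, so $[\Q(P):\Q]$ divides $3$. Combined with divisibility of $12$, this leaves the two options $[\Q(P):\Q]\in\{1,3\}$.

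Finally I would rule out $[\Q(P):\Q]=1$: otherwise $P$ would be a rational $13$-torsion point on $E/\Q$, contradicting Mazur's list \eqref{eq:rt}. Therefore $[\Q(P):\Q]=3$, and by degree reasons $K=\Q(P)$; since $\Q(P)$ is cyclic Galois over $\Q$, $K$ is cyclic cubic. The argument is essentially a direct concatenation of the two cited lemmas with Mazur, so there is no real obstacle — the only sanity check to carry out is that the hypotheses of part (b) ($n$ odd, $3\nmid n$) apply to $n=13$, which they obviously do.
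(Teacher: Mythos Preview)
Your proof is correct and follows essentially the same approach as the paper's. The paper's own proof is just the two-sentence compression of your argument: it notes that Mazur's theorem forces $E(\Q)_{tors}=\{O\}$ and then invokes Lemma~\ref{lem:GNT} to conclude that $K$ is cyclic, leaving the unpacking (part (b) to get a $\Q$-rational $13$-isogeny, part (a) to get $\Q(P)/\Q$ cyclic, degree count to force $\Q(P)=K$) implicit.
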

\begin{proof}
By Mazur's Theorem, $E(\Q)_{tors}=\{O\}$ is trivial. So Lemma \ref{lem:GNT} shows that $K$ must be cyclic.
\end{proof}
 

Now we will show that $\Z/13\Z$ occurs infinitely often over cyclic cubic fields and construct an infinite family of elliptic curves whose torsion group is $\Z/13\Z$ over cyclic cubic fields.
By the computation in \cite{CW}, we have a family of elliptic curves $E_{t,U}$ which have $13$-isogenies over cyclic cubic fields $K_{t,U}$ as follows:

\begin{equation}\label{eq:iso13}
E_{t,U}:\ \ y^2 = x^3 + A_{13}(t,U)x + B_{13}(t,U),
\end{equation}
where 
\begin{align*}
A_{13}(t,U)=&-27(t^4-t^3+5t^2+t+1)(t^2+1)^2(t^8-5t^7+7t^6-5t^5+5t^3+7t^2\\
&+5t+1)U^2,\\
B_{13}(t,U)=&-54(t^4-t^3+5t^2+t+1)(t^2+1)^4(t^{12}-8t^{11}+25t^{10}-44t^9+40t^8\\
&+18t^7-40t^6-18t^5+40t^4+44t^3+25t^2+8t+1)U^3
\end{align*}
with $t\neq 0$.

$K_{t,U}=\Q(\alpha_{t,U})$ where $\alpha_{t,U}$ is a root of an irreducible cubic polynomial $a_3(t,U)x^3+a_2(t,U)x^2 +a_1(t,U)x+a_0(t,U)$ for some rational numbers $t$ and $U$ where
\begin{align*}
a_3(t,U)=&t^{12},\\
a_2(t,U)=&9t^8(t-1)^2(t^2+1)(t^4-t^3+5t^2+t+1)U,\\
a_1(t,U)=&27t^4(t^2+1)^2(t^4-t^3+5t^2+t+1)(t^8-5t^7+15t^6-29t^5+16t^4-3t^3\\
&-9t^2-3t+1)U^2,\\
a_0(t,U)=&27(t^2+1)^3(t^4-t^3+5 t^2+t+1)(t^{14}-8 t^{13}+38 t^{12}-124 t^{11}+245 t^{10}\\
&-326 t^9+228 t^8+120 t^7+12 t^6+38 t^5-43 t^4-80 t^3-34 t^2-4 t+1)U^3.
\end{align*}

For simplicity, let $E_t$, $K_t$ and $\alpha_t$ denote $E_{t,1}$, $K_{t,1}$ and $\alpha_{t,1}$, respectively.
Now we will find a quadratic twist $E_{t,U}$ of $E_t$ that has a $K_t$-rational 13-torsion point.
Note that $\alpha_t$ is the $x$-coordinate of a $13$-torsion point on $E_t$.
Let $\beta_t$ denote its $y$-coordinate.
By a quadratic twist $U=d^2$, $E_t$ becomes $E_{t,U}$ and $(\alpha_t,\,\beta_t)$ maps to $(d^2\alpha_t,\, d^3\beta_t)$.
Thus $U=d^2\in\Q$ and $d^3\beta_t\in K_t$, hence $d\beta_t\in K_t$.
One can easily check that $E_{t,U}$ has a 13-torsion point over $K_t$ if and only if $U=d^2\in\Q$ and $d\beta_t\in K_t$.
Now $\beta_t^2=\alpha_t^3+A_{13}(t,1)\alpha_t+B_{13}(t,1)=a_2(t)\alpha_t^2+a_1(t)\alpha_t+a_0(t)\in K_t$ where
\begin{align*}
a_2(t)=&-9(t^2+1)(t^4-t^3+5t^2+t+1)(t^5-t^4)^2/t^{12},\\
a_1(t)=&-18(t^2+1)(t^4-t^3+5t^2+t+1)(3t^9-12t^8+24t^7-42t^6+15t^5-33t^4\\
&-12t^3-6t^2-6t-3)(t^5-t^4)/t^{12},\\
a_0(t)=&-9(t^2+1)(t^4-t^3+5t^2+t+1)(3t^9-12t^8+24t^7-42t^6+15t^5-33t^4\\
&-12t^3-6t^2-6t-3)^2/t^{12}.
\end{align*}
Hence we have
\begin{equation}\label{eq:beta1}
d^2\beta_t^2=d^2a_2(t)\alpha_t^2+d^2a_1(t)\alpha_t+d^2a_0(t).
\end{equation}

On the other hand, $d^2\beta_t^2=(d\beta_t)^2$ is a square in $K_t$, hence we have
\begin{equation}\label{eq:beta2}
d^2\beta_t^2=(b_2(t)\alpha_t^2+b_1(t)\alpha_t+b_0(t))^2=c_2(t)\alpha_t^2+c_1(t)\alpha_t+c_0(t)
\end{equation}
for some $b_i(t),c_i(t)\in \Q(t)$ with $i=0,1,2$.
By comparing \eqref{eq:beta1} and \eqref{eq:beta2} and using {\sc Maple}, we can obtain the following:
\begin{align*}
U =&\frac{-1}{(t^2+1)(t^4-t^3+5t^2+t+1)},\\
b_2(t) =& 0,\\
b_1(t) =& \frac{3(t-1)}{t^2},\\
b_0(t)=&\frac{9(t^2+1)(t^7-4t^6+7t^5-10t^4-2t^3-t^2-2t-1)}{t^6}.
\end{align*}
Finally, by letting $t=u$ and $U =\frac{-1}{(u^2+1)(u^4-u^3+5u^2+u+1)}$ in \eqref{eq:iso13}, we have an infinite family of rational elliptic curves $E_u$ over cyclic cubic fields $K_u$ with $E_u(K_u)_{tors}=\Z/13\Z$ as follows:

$\bullet\, E_u:\ \ y^2=x^3+A(u)x+B(u)$ where
\begin{align*}
A(u)=&-27(u^8-5 u^7+7 u^6-5 u^5+5 u^3+7 u^2+5 u+1)/(u^4-u^3+5u^2+u+1),\\
B(u)=&54(u^2+1)(u^{12}-8 u^{11}+25 u^{10}-44 u^9+40 u^8+18 u^7-40 u^6-18 u^5+40 u^4\\
&+44 u^3+25 u^2+8 u+1)/(u^4-u^3+5u^2+u+1)^2
\end{align*}
with $u(u^4 -u^3 +5u^2 +u+1)\neq 0$.

$\bullet\, K_u=\Q(\alpha_u)$ where $\alpha_u$ is a root of an irreducible polynomial $a_3(u)x^3+a_2(u)x^2 +a_1(u)x+a_0(u)$ for some rational number $u$ where
\begin{align*}
a_3(u)=&u^{12}(u^4-u^3+5u^2+u+1)^2,\\
a_2(u)=&-9u^8(u-1)^2(u^4-u^3+5u^2+u+1)^2,\\
a_1(u)=&27u^4(u^4-u^3+5u^2+u+1)(u^8-5u^7+15u^6-29u^5+16u^4-3u^3-9u^2\\
&-3u+1),\\
a_0(u)=&-27 u^{14}+216 u^{13}-1026 u^{12}+3348 u^{11}-6615 u^{10}+8802 u^9-6156 u^8\\
&-3240 u^7-324 u^6-1026 u^5+1161 u^4+2160 u^3+918 u^2+108 u-27.\\
\end{align*}

By this result together with Lemma \ref{lem:13}, we have the following result:

\begin{thm} Suppose $E$ is a rational elliptic curve with $E(K)_{tors}=\Z/13\Z$ for some cubic field $K$.
Then $K$ is a cyclic cubic field.
Moreover, there exist infinitely many non-isomorphic rational elliptic curves $E$ and cyclic cubic fields $K$ so that $E(K)_{tors}=\Z/13\Z$.
\end{thm}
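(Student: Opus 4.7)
The first assertion is immediate from Lemma \ref{lem:13}, so everything reduces to producing the required infinite family. The plan is to use the family $(E_u, K_u)$ parametrized by $u \in \Q$ that has just been constructed, and to verify three things for infinitely many $u$: (i) the cubic polynomial $a_3(u)x^3+a_2(u)x^2+a_1(u)x+a_0(u)$ is irreducible over $\Q$, so that $[K_u:\Q]=3$ and, by Lemma \ref{lem:GNT}(a) applied to the $\Q$-rational 13-isogeny on $E_u$, $K_u$ is cyclic cubic; (ii) $E_u(K_u)_{tors}$ equals $\Z/13\Z$ rather than a strictly larger group; and (iii) the $E_u$ fall into infinitely many $\Q$-isomorphism classes.

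Item (ii) is the easiest: by construction $E_u(K_u)_{tors}$ contains an element of order $13$, and an inspection of Najman's list \eqref{eq:ct} shows that no group in that list properly contains $\Z/13\Z$, so equality is forced. For item (iii) I would compute the $j$-invariant $j(E_u)$ from the explicit $A(u)$ and $B(u)$ and observe that it is a non-constant rational function of $u$; since each rational value of $j$ is attained by only finitely many $\Q$-isomorphism classes of elliptic curves, this furnishes infinitely many such classes as $u$ ranges over $\Q$.

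The main obstacle is item (i): establishing irreducibility of the specialized cubic for infinitely many rational $u$. The natural route is first to check that the cubic is irreducible in $\Q(u)[x]$---since the polynomial has degree $3$ it suffices to exhibit a single value of $u$ at which the specialization is $\Q$-irreducible (equivalently, to verify that the generic cubic has no root in $\Q(u)$)---and then to invoke Hilbert's irreducibility theorem, which guarantees that irreducibility is preserved under specialization for all $u$ outside a thin set, hence for infinitely many rational $u$. Intersecting this Hilbert set with the set where $j(E_u)$ takes a previously unattained value, one extracts the desired infinite sequence of pairs $(E_u, K_u)$.
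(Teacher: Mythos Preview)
Your proposal is correct and follows the same route as the paper: both simply invoke the explicitly constructed family $(E_u,K_u)$, with you supplying the verification details (Hilbert irreducibility for (i), Najman's list \eqref{eq:ct} for (ii), non-constant $j$-invariant for (iii)) that the paper leaves tacit after writing down the family.

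One phrasing slip to fix in (iii): the assertion that ``each rational value of $j$ is attained by only finitely many $\Q$-isomorphism classes of elliptic curves'' is false as stated---quadratic twists already give infinitely many $\Q$-isomorphism classes with the same $j$. What you actually need (and what suffices) is simpler: since $j(E_u)$ is a non-constant rational function of $u$, it assumes infinitely many distinct values on the Hilbert set, and elliptic curves with distinct $j$-invariants are non-isomorphic over any field.
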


\begin{center}
2.2.\, $E(K)_{tors}=\Z/14\Z$
\end{center}

In this subsection, we are interested in the case where $\Z/14\Z$ is the full torsion of $E(K)$.
Elliptic curves with $E(K)_{tors}=\Z/2\Z\times\Z/14\Z$ will be treated in Subsection 2.4.
Suppose $E$ is a rational elliptic curve with $E(K)_{tors}=\Z/14\Z$ for some cubic field $K$.
By \cite[Theorem 1.2]{GNT}, $E(\Q)_{tors}=\Z/2\Z$ or $E(\Q)_{tors}=\Z/7\Z$.

First consider the case $E(\Q)_{tors}=\Z/2\Z$. Let $P$ be a $7$-torsion point in $E(K)_{tors}$.
Then $K=\Q(P)$, hence $K$ is a cyclic cubic field by Lemma \ref{lem:GNT}.
Also $E(K)_{tors}$ defines a rational $14$-isogeny on $E$, hence it gives rise to a non-cuspidal rational point on $X_0(14)$.
By \cite{K}, $X_0(14)$ contains only two such points.
Now we explain how to find the rational elliptic curves corresponding to them.
By the method explained in \cite{JKL2}, we can construct a map from $X_1(14)\to X_0(14)$.
Note that $X_1(14)$ and $X_0(14)$ are defined by
\begin{align}\label{eq:14-1}
&X_1(14):\,\, y^2 + (x^2 + x)y + x=0,\\ \label{eq:14-2}
&X_0(14):\,\, v^2+(u+3)v+u^3+6u+8=0.
\end{align}
Then the natural map $\phi:X_1(14)\to X_0(14)$ is defined by
\begin{equation}\label{eq:map14}
(u,v)=\phi(x,y)=\left(\frac{-1-y+y^3}{y^2}, \frac{-1-x^2-x^3-y-y^3-3xy-xy^2}{xy}\right).
\end{equation}
We can find 6 rational points satisfying \eqref{eq:14-2} which correspond to 2 non-cuspidal points and 4 cusps.
The rational points corresponding to non-cuspidal points are $P_1:=(-2,3)$ and $P_2:=(-9,-25)$.
By using \eqref{eq:map14}, we have that the points lying above $P_1$ and $P_2$ which satisfy \eqref{eq:14-1} are $Q_1:=(1-\alpha-\alpha^2,\,\alpha)$ and $Q_2:=\left(-3 + 2 \alpha +  2 \alpha^2,\,1-2\alpha^2\right)$, respectively, where $\alpha$ is a root of the irreducible cubic polynomial $x^3+2x^2-x-1$.
Actually,
$$K=\Q(\alpha)=\Q(\zeta_7)^+,$$
the maximal real subfield of the $7$-th cyclotomic field.

By using the rational maps in Table 7 and p.~1133 of \cite{S}, we obtain the elliptic curves  $E_1$ and $E_2$ corresponding to $Q_1$ and $Q_2$, respectively, as follows:
\begin{align*}
E_1:\,& y^2+\left(\frac{5}{7}\alpha^2+\frac{2}{7}\alpha +\frac{3}{7}\right)xy+\left(\alpha^2-\frac{1}{7}\alpha-\frac{3}{7}\right)y=x^3+\left(\alpha^2-\frac{1}{7}\alpha-\frac{3}{7}\right)x^2\\
E_2:\,& y^2-\left(\frac{13}{7}\alpha^2+\frac{22}{7}\alpha -\frac{23}{7}\right)xy-\left(\frac{4}{7}\alpha^2+\frac{12}{7}\alpha+1\right)y=x^3-\left(\frac{4}{7}\alpha^2+\frac{12}{7}\alpha+1\right)x^2
\end{align*}
They are $K$-rational elliptic curves with torsion $\Z/14\Z$ over $K$.
Also the $j$-invariants of $E_1$ and $E_2$ are $-3375$ and $16581375$, respectively.
By using LMFDB\cite{L}, we can find that 
$$ 49A3:\ \ \ y^2 +xy = x^3 -x^2 -107x+552$$
and 
$$ 49A4:\ \ \ y^2 +xy = x^3 -x^2 -1822x+30393$$

are elliptic curves with $j$-invariants $-3375$ and $16581375$, respectively.
By the computer algebra system {\sc Maple}, we confirm that $E_1$ and $E_2$ are isomorphic to 49A3 and 49A4 over $K$, respectively.
Thus we can conclude that 49A3 and 49A4 are two rational elliptic curves corresponding to the two non-cuspidal rational points on $X_0(14)$.

However, proving that they are up to $\Q$-isomorphisms the only rational elliptic curves with torsion $\Z/14\Z$ over $\Q(\zeta_7)^+$ 
requires some more justification. A priori, the modular curve only tells us that they are the only such curves up to $\overline{\Q}$-isomorphisms.
We still have to exclude the possibility that a quadratic twist, i.e. a rational elliptic curve isomorphic to 49A3 or 49A4 only over
$\Q(\sqrt{d})$, might also have torsion $\Z/14\Z$ over $K$. 

The quadratic twist multiplies the $y$-coordinate of the $7$-torsion point
with a quadratic irrationality, and hence moves it out of $K$. But we have to take into account the possibility that at the same time another 
$7$-torsion point might become $K$-rational. This would imply that over the composite field $\Q(\sqrt{d})K$ the curve 49A3 resp. 49A4 has
two independent $7$-torsion points; so by the Weil pairing $\Q(\sqrt{d})K=\Q(\zeta_7)$, and hence $d=-7$.
Now we can invoke \cite[Theorem 1.1]{GL}, which says (among other things) that no rational elliptic curve can
acquire its full $7$-torsion over $\Q(\zeta_7)$. Alternatively, one can use {\sc Magma} to check directly that
over $K=\Q(\zeta_7)^+$ the $\Q(\sqrt{-7})$-twists of 49A3 and 49A4 still only have torsion $\Z/2\Z$.

We also mention that 49A3 and 49A4 are $CM$-curves with complex multiplication by $\Z[\frac{1+\sqrt{-7}}{2}]$ resp. $\Z[\sqrt{-7}]$.

Next consider the case $E(\Q)=\Z/7\Z$.
Suppose $E$ is defined by a short Weierstrass form.
In this case, if we adjoin the $x$-coordinate $x(P)$ of a point $P$ of order 2, we have a cubic field $K=\Q(x(P))$ over which $E(K)=\Z/14\Z$.
Note that by \cite[Theorem 1.2]{GNT} (or rather by the proof of \cite[Proposition 29]{N}), $E(\Q)_{tors}\cong\Z/7\Z$ can never give
$E(K)_{tors}\cong\Z/2\Z\times\Z/14\Z$, so $K$ is automatically non-Galois.

Since $X_1(7)$ is a rational curve, it contains infinitely many rational points, hence there exists an infinite family of elliptic curves with $7$-torsion.
One can find the parametrization of such curves in \cite[Table 3]{Ku} as follows;

$\bullet\ \  E_u:\,y^2-(u^2-u-1)xy-(u^3-u^2)y=x^3-(u^3-u^2)x^2$

\noindent with discriminant 
$$\Delta_u= u^7(u-1)^7(u^3-8u^2+5u+1)\neq 0.$$ 

We point out that in \cite{N} the constant term in the cubic factor of this discriminant carries an incorrect sign and that this slip propagates through 
that paper (\cite[p.262]{N} and \cite[p.265]{N}). However, it seems that this is merely a typo and that for the computer calculations the correct formula 
has been used. For example on page 265 in the proof that $E(\Q)_{tors}\cong\Z/7\Z$ never produces $E(K)_{tors}\cong\Z/2\Z\times\Z/14\Z$ the printed 
(i.e. incorrect) formula would lead to a Jacobian of rank $1$. On page 262 (in the proof that $28$-torsion cannot occur) the mistake does not affect the 
outcome of the computation.

Note that $E_u$ is isomorphic to the elliptic curve defined by
\begin{align*}
y^2=f_u(x):=&x^3-\frac{1}{3}(u^2-u+1)(u^6-11u^5+30u^4-15u^3-10u^2+5u+1)x\\
&+\frac{2}{27}(u^{12}-18 u^{11}+117 u^{10}-354 u^9+570 u^8-486 u^7+273 u^6\\
&-222 u^5+174 u^4-46 u^3-15 u^2+6 u+1)
\end{align*}
Let $\alpha_u$ be a root of an irreducible polynomial $f_u(x)$ for some rational number $u$.
Then $K_u:=\Q(\alpha_u)$ is a cubic field, so that $E_u(K_u)_{tors}=\Z/14\Z$.
Let $r_1,r_2,r_3$ be the three real roots of $u^3-8u^2+5u+1=0$ with $r_1<r_2<r_3$, then $r_1<0<r_2<1<r_3$.
Put $I:=(-\infty,\,r_1)\cup (0,\,r_2) \cup (1,\, r_3)$, $J:=(r_1,\,0)\cup (r_2,\,1) \cup (r_3,\,\infty)$.
Then $\Delta_u<0$ for $u \in I$ and $\Delta_u>0$ for $u \in J$, hence $E_u$ has torsion $\Z/14\Z$ over complex cubic field $K_u$ when $u\in I\cap\Q$ and over totally real, but non-Galois cubic fields $K_u$ when $u\in J\cap\Q$.

Finally, we consider the torsion of $E_u$ over pure cubic fields.
For that we need the following easy fact.
\begin{lemma}\label{lem:pure} The discriminant of a pure cubic number field is of the form $-27d^2$ for some $d\in\Q$.
\end{lemma}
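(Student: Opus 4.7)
The plan is to exploit the relationship between the discriminant of a monic integral polynomial generating $K$ and the discriminant of $K$ itself. Concretely, let $K=\Q(\alpha)$ with $\alpha=\sqrt[3]{n}$ for some positive integer $n$ which is not a perfect cube, so that the minimal polynomial of $\alpha$ is $f(x)=x^3-n$.

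First, I would compute $\operatorname{disc}(f)$ directly. For a depressed cubic $x^3+px+q$, the discriminant is $-4p^3-27q^2$, so $\operatorname{disc}(f)=-27n^2$. Since $\alpha\in\mathcal{O}_K$, the order $\Z[\alpha]$ sits inside $\mathcal{O}_K$, and the standard index formula gives
\begin{equation*}
\operatorname{disc}(\Z[\alpha])=[\mathcal{O}_K:\Z[\alpha]]^2\cdot\operatorname{disc}(K).
\end{equation*}
Writing $m:=[\mathcal{O}_K:\Z[\alpha]]\in\Z_{>0}$ and using $\operatorname{disc}(\Z[\alpha])=\operatorname{disc}(f)=-27n^2$, this becomes $-27n^2=m^2\cdot\operatorname{disc}(K)$.

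Finally, setting $d:=n/m\in\Q$, I obtain $\operatorname{disc}(K)=-27d^2$, which is the desired conclusion. There is no real obstacle: the argument relies only on the well-known discriminant formula for a depressed cubic and the index-discriminant relation between an order and the full ring of integers. (As a sanity check, the negative sign is also forced by the signature of $K$: exactly one real embedding of $\sqrt[3]{n}$, hence $\operatorname{disc}(K)<0$.)
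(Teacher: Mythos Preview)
Your proof is correct and follows essentially the same route as the paper: both compute $\operatorname{disc}(x^3-n)=-27n^2$ (the paper via an explicit $5\times 5$ resultant matrix for $R(f,f')$, you via the depressed-cubic formula $-4p^3-27q^2$). You are in fact slightly more complete than the paper, since you explicitly invoke the index relation $\operatorname{disc}(\Z[\alpha])=[\mathcal{O}_K:\Z[\alpha]]^2\,\operatorname{disc}(K)$ to pass from the polynomial discriminant to the field discriminant, a step the paper leaves implicit.
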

\begin{proof} Let $K=\Q(\sqrt[3]{n})$ and $f(x)=x^3-n$ for some positive integer $n$.
Then the discriminant of $f$ is given by $D(f)=-R(f,f')$ where $R(f,f')$ is the resultant of $f$ and $f'$.
Note that
$$R(f,f')=\left|\begin{array}{rrrrr}1&0&0&-n&0\\0&1&0&0&-n\\ 3&0&0&0&0 
\\0&3&0&0&0 \\ 0&0&3&0&0\end{array} \right|=27n^2.$$
Thus the result follows.
\end{proof}

Suppose $u\in I\cap\Q$. By Lemma \ref{lem:pure} a necessary condition for $K_u$ to be a pure cubic field is
\begin{equation}\label{eq:de}
-27k^2=\Delta_u=u^7(u-1)^7(u^3-8u^2+5u+1)
\end{equation}
for some $k\in\Q$.
By letting $v=\frac{9k}{u^3(u-1)^3}$ in \eqref{eq:de}, we have the following equation:
$$v^2=-3u(u-1)(u^3-8u^2+5u+1),$$
which defines a hyperelliptic curve $C$ of genus 2.
Note that the Jacobian $J(C)$ is of rank 0.
Applying the Chabauty method implemented in the computer algebra system {\sc Magma}, we obtain that all the rational points are $(0,\,0),\,(1,\,0),\infty$.
However, these points cannot give pure cubic fields.
Therefore, the torsion $\Z/14\Z$ cannot occur over pure cubic fields.

\begin{thm} Suppose $E$ is a rational elliptic curve with $E(K)_{tors}=\Z/14\Z$ over some cubic field $K$.
\begin{itemize}
\item[(a)] If $E(\Q)_{tors}=\Z/2\Z$, then $E$ is one of {\rm 49A3} and {\rm 49A4}, and $K=\Q(\zeta_7)^+$.
\item[(b)] If $E(\Q)_{tors}=\Z/7\Z$, then there exist infinitely many non-isomorphic rational elliptic curves $E$ over both totally real cubic and complex cubic fields $K$ so that $E(K)_{tors}=\Z/14\Z$.
But there is no rational elliptic curve $E$ so that $E(K)_{tors}=\Z/14\Z$ over a pure cubic field $K$.
\end{itemize}
\end{thm}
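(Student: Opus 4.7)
The plan is to formalize the case analysis already laid out: parts (a) and (b) correspond to the two possibilities $E(\Q)_{tors}=\Z/2\Z$ or $\Z/7\Z$ forced by \cite[Theorem 1.2]{GNT}.

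For (a), a $7$-torsion point $P$ in $E(K)_{tors}$ cannot lie in $E(\Q)_{tors}$, so $K=\Q(P)$; Lemma \ref{lem:GNT}(b) shows $P$ generates a $\Q$-rational $7$-isogeny, and Lemma \ref{lem:GNT}(a) then forces $K$ to be cyclic. The full $\Z/14\Z$ subgroup is Galois-stable and yields a rational $14$-isogeny, hence a non-cuspidal rational point on $X_0(14)$. By Kenku's enumeration \cite{K} there are exactly two such points, and I would pull them back through the map $\phi$ of \eqref{eq:map14} to obtain points $Q_1,Q_2\in X_1(14)(K)$ with $K=\Q(\zeta_7)^+$, use the formulae in \cite{S} to recover the two $K$-isomorphism classes $E_1,E_2$, match their $j$-invariants $-3375$ and $16581375$ via \cite{L} with the rational curves $49A3$ and $49A4$, and verify the $K$-isomorphism in \textsc{Maple}. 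The only delicate point is descending from $\overline{\Q}$-isomorphism to $\Q$-isomorphism: a quadratic twist by $\sqrt{d}$ destroys the $K$-rationality of the original $7$-torsion point, so if the twist still has torsion $\Z/14\Z$ over $K$ then some other $7$-torsion point must become $K$-rational, forcing full $7$-torsion over $\Q(\sqrt{d})K$. By the Weil pairing this field contains $\zeta_7$ and has degree at most $6$ over $\Q$, hence equals $\Q(\zeta_7)$, so $d=-7$ up to squares. This remaining case is excluded either by \cite[Theorem 1.1]{GL} or by a direct \textsc{Magma} computation of the $\Q(\sqrt{-7})$-twists' torsion.

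For (b), I start from Kubert's family $E_u$ in \cite[Table 3]{Ku} parametrizing $X_1(7)(\Q)$, with discriminant $\Delta_u=u^7(u-1)^7(u^3-8u^2+5u+1)$. Passing to the short Weierstrass form $y^2=f_u(x)$ and adjoining a root $\alpha_u$ of $f_u$ produces $K_u=\Q(\alpha_u)$, over which $E_u(K_u)_{tors}$ contains $\Z/14\Z$; equality holds and $K_u$ is non-Galois because \cite[Theorem 1.2]{GNT} forbids $\Z/2\Z\times\Z/14\Z$ in this $\Q$-torsion class. Analyzing the sign of $\Delta_u$ using the three real roots of $u^3-8u^2+5u+1$ partitions the rational line into the sets $I$ (giving complex cubic $K_u$) and $J$ (giving totally real non-Galois cubic $K_u$) as described above, each containing infinitely many rationals that yield pairwise non-isomorphic curves.

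The main obstacle is the negative assertion for pure cubic fields. By Lemma \ref{lem:pure}, $K_u$ being pure cubic requires $\Delta_u=-27k^2$ for some $k\in\Q$; the substitution $v=9k/(u^3(u-1)^3)$ converts this into the genus-$2$ equation
\[
v^2=-3u(u-1)(u^3-8u^2+5u+1),
\]
whose Jacobian I would verify has rank $0$, so that a Chabauty argument carried out in \textsc{Magma} provably enumerates all rational points as $(0,0)$, $(1,0)$, and the point at infinity. None of these corresponds to a parameter value $u$ producing a pure cubic $K_u$, which rules out the pure cubic case and completes the proof.
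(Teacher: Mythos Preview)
Your proposal is correct and follows essentially the same route as the paper: the same use of Lemma~\ref{lem:GNT} and Kenku's classification of $X_0(14)(\Q)$ for part~(a), including the identical quadratic-twist argument via the Weil pairing and \cite{GL}, and the same Kubert parametrization, sign analysis of $\Delta_u$, and Chabauty computation on the genus-$2$ curve $v^2=-3u(u-1)(u^3-8u^2+5u+1)$ for part~(b). There are no substantive differences to report.
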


\begin{center}
2.3.\, $E(K)_{tors}=\Z/18\Z$
\end{center}

Suppose $E$ is a rational elliptic curve with $E(K)_{tors}=\Z/18\Z$ for some cubic field.
By \cite[Theorem 1.2]{GNT}, $E(\Q)_{tors}=\Z/6\Z$ or $E(\Q)_{tors}=\Z/9\Z$.

First consider the case $E(\Q)_{tors}=\Z/6\Z$. We have the following result:
\begin{lemma}\label{tor18} Suppose $E$ is a rational elliptic curve with $E(K)_{tors}=\Z/18\Z$ for some cubic field $K$. 
If $E(\Q)_{tors}=\Z/6\Z$, then $K$ is a cyclic cubic field.
\end{lemma}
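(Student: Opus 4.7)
The conclusion that $K$ is cyclic cubic is equivalent to the Galois-stability of the cyclic subgroup $\langle P_{18}\rangle\subset E(\bar\Q)$, where $P_{18}$ generates $E(K)_{tors}\cong\Z/18\Z$. Indeed, if $\langle P_{18}\rangle$ is stable, $\mathrm{Gal}(\bar\Q/\Q)$ acts on it via a character $\psi\colon\mathrm{Gal}(\bar\Q/\Q)\to(\Z/18\Z)^{*}$, and since $3P_{18}\in E(\Q)$ must be fixed, $\psi$ lands in $\{1,7,13\}\cong\Z/3\Z$; the fixed field of $\ker\psi$ is then the cyclic cubic $K$.

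\textbf{Reduction via a $\Q$-rational 3-isogeny.} Let $\phi\colon E\to E':=E/\langle P_3\rangle$ be the $\Q$-rational $3$-isogeny with kernel $\langle P_3\rangle=\langle 6P_{18}\rangle$, and let $\hat\phi$ be its dual. The point $Q:=\phi(P_{18})\in E'(K)$ has order $6$ with $3Q\in E'(\Q)$. Because $\ker\phi\subset\langle P_{18}\rangle$, a direct calculation gives $\phi^{-1}(\langle Q\rangle)=\langle P_{18}\rangle$, so Galois-stability of $\langle P_{18}\rangle$ is equivalent to that of $\langle Q\rangle$; and since any character $\mathrm{Gal}(\bar\Q/\Q)\to(\Z/6\Z)^{*}=\{\pm1\}$ is either trivial or has quadratic fixed field, whereas $\Q(Q)\subseteq K$ is cubic, Galois-stability of $\langle Q\rangle$ further reduces to $Q\in E'(\Q)$. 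All Galois conjugates of $Q$ satisfy $\hat\phi(\sigma Q)=3P_{18}$, so they lie in the $3$-element coset $Q+\ker\hat\phi$, and $\mathrm{Gal}(\bar\Q/\Q)$ acts on $\ker\hat\phi$ via the mod-$3$ cyclotomic character $\chi_3$. The induced action on the coset is affine, $k\mapsto\epsilon(\sigma)+k\chi_3(\sigma)$, for a $1$-cocycle $\epsilon$ with values in $\ker\hat\phi$.

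\textbf{Ruling out the non-Galois case.} When $\epsilon\equiv 0$, $Q$ is Galois-fixed so $Q\in E'(\Q)$ and we are done. Otherwise the orbit of $Q$ has size $3$, $K$ is non-Galois cubic with Galois closure $L$ of degree $6$ and $\mathrm{Gal}(L/\Q)=S_3$; tracing back through $\phi$, the three Galois conjugates of $P_{18}$ over $L$ differ from $P_{18}$ by $0$ and two distinct elements of $E[3]\setminus\langle P_3\rangle$, which together with $P_3$ generate the full $3$-torsion $E[3]\subset E(L)$, yielding
\[
E(L)_{tors}\supseteq\langle P_{18}\rangle + E[3]\;\cong\;\Z/6\Z\oplus\Z/9\Z
\]
of order $54$. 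The main obstacle is to rule out this configuration. The Weil pairing forces $\Q(\zeta_3)\subset L$ and hence $\mathrm{disc}(K)\in -3\cdot(\Q^{*})^{2}$; one then needs to invoke the classification of torsion subgroups of rational elliptic curves over sextic fields (extending the cubic-field results of \cite{N,DN}) to exclude $\Z/6\Z\oplus\Z/9\Z$ over an $S_3$-sextic extension, giving the required contradiction.
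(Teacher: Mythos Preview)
Your reduction is correct up to the last paragraph: the equivalence ``$K$ cyclic $\Leftrightarrow$ $\langle P_{18}\rangle$ is Galois-stable $\Leftrightarrow$ $Q:=\phi(P_{18})\in E'(\Q)$'' is fine, and in the non-Galois case you correctly deduce that the Galois closure $L$ contains all of $E[3]$ together with $P_{18}$, so that $E(L)_{tors}$ contains a group of order $54$. The problem is the final sentence. You hand the contradiction off to a ``classification of torsion subgroups of rational elliptic curves over sextic fields'', but no such result is available in the paper's references (nor, to our knowledge, in the literature in the form you need). Absent that input, the argument is incomplete: you have shown that the non-Galois scenario forces $\Z/3\Z\times\Z/18\Z\subseteq E(L)_{tors}$ with $[L:\Q]=6$, but you have not shown that this is impossible.

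The paper closes exactly this gap by a different and much more economical move. Rather than quotienting by the obvious $3$-isogeny $\langle P_3\rangle$ (your $E'$), it first argues, via \cite[Proposition~14]{N}, that in the non-Galois case $E$ has a second $\Q$-rational $3$-isogeny $\langle S\rangle\neq\langle P_3\rangle$, and then passes to $\widetilde{E}=E/\langle S\rangle$. After pinning down the $S_3$-action one finds that the Galois orbit of the $9$-torsion point $R=2P_{18}$ is exactly $\{R,R+S,R+2S\}$; these collapse to a single $\Q$-rational $9$-torsion point on $\widetilde{E}$. Together with the image of the rational $2$-torsion this gives a $\Q$-rational $18$-torsion point on $\widetilde{E}$, contradicting Mazur's theorem directly. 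So the contradiction is obtained over $\Q$, not over a sextic field, and no classification beyond Mazur (plus one lemma from \cite{N}) is needed. Your quotient by $\langle P_3\rangle$ kills the $9$-divisibility information and cannot produce this contradiction; the point is to quotient by the \emph{other} $3$-isogeny.
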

\begin{proof} 
Let $Q$ be a $K$-rational $18$-torsion point of $E$. Then $P=6Q$ must be one of the two $\Q$-rational $3$-torsion points of $E$.
If not, $E$ would have two independent $K$-rational $3$-torsion points which by the Weil pairing would lead to the contradiction
$\zeta_3 \in K$. 
\par 
Over $\overline{\Q}$ there are exactly nine $9$-torsion points $R$ of $E$ with $3R=P$. Always three of them are multiples of each 
other (and hence generate the same field extension of $\Q$) and lie in the same cyclic $9$-isogeny. So, fixing the $K$-rational
$9$-torsion point $R=2Q$, we see that $K/\Q$ is Galois if and only if the Galois conjugates of $R$ are $4R$ and $7R$. If $K/\Q$
is not Galois, then each of $R$ and its two Galois conjugates must lie in a different one of the three cyclic $9$-isogenies
containing $P$; so in this case none of the cyclic $9$-isogenies containing $P$ can be $\Q$-rational.
\par 
Next we note that if $K/\Q$ is not Galois, then $E$ has a $\Q$-rational $3$-isogeny $\langle S\rangle$ different from $\langle P\rangle$.
If not, then by \cite[Proposition 14]{N} $E$ has a $\Q$-rational $9$-isogeny and we can take the $3$-isogeny it contains, which, by what
was just discussed, for non-Galois $K$ is different from $\langle P\rangle$.
\par 
Now assume that $K/\Q$ is not Galois, let $L$ be the Galois closure of $K/\Q$ and $Gal(L/\Q)=\langle \sigma, \tau\rangle\cong S_3$
where $\sigma$ is an automorphism of order $3$ and $\tau$ is the involution fixing $K$. Then
$$ \tau(R)=R,\ \ \tau(S)=-S,\ \ \sigma(S)=S\ \ \ \hbox{\rm and}\ \ \ \sigma(R)=\alpha R+\beta S$$
with $\alpha\in\{1,4,7\}$ and $\beta\in\{1,2\}$. Replacing $S$ by $-S$ if necessary, we can assume $\beta=1$. Then the relation
$(\sigma\tau)^2=id$ forces $\alpha=1$. 
\par 
Now we consider the elliptic curve $\widetilde{E}=E/\langle S\rangle$, that is, the image of $E$ under the $\Q$-rational $3$-isogeny
whose kernel is $\langle S\rangle$. The $Gal(\overline{\Q}/\Q)$-orbit of $R$ consists of the points $R$, $R+S$ and $R+2S$, which all
map to the same point on $\widetilde{E}$. So the image of $R$ on $\widetilde{E}$ is a $\Q$-rational point, and still a $9$-torsion 
point (as $3R=P$ is not in the kernel). But $\widetilde{E}$ also inherits a $\Q$-rational $2$-torsion point. So all in all 
$\widetilde{E}$ is an elliptic curve over $\Q$ with a $\Q$-rational $18$-torsion point. This finally is the desired contradiction.
\end{proof}

Suppose $E$ is a rational elliptic curve with $E(K)_{tors}=\Z/18\Z$ for some cyclic cubic field.
Then ${\rm Gal}(K/\Q)$ acts on $E(K)_{tors}=\Z/18\Z$.
Thus $E(K)_{tors}$ defines a rational cyclic $18$-isogeny on $E$, hence we get a non-cuspidal rational point on $X_0(18)$.

Conversely, suppose there is a non-cuspidal rational point on $X_0(18)$.
This corresponds to a rational elliptic curve $E$ with a rational cyclic $18$-isogeny.
Here we assume $E$ is defined by a short Weierstrass equation.
Note that the underlying 2-torsion point is rational, but some elements of ${\rm Gal}(\overline\Q/\Q)$ might map the underlying 3-torsion point $P$ to its inverse.
Thus $x(P)\in\Q$, but the $y$-coordinate $y(P)$ of $P$ might be quadratic over $\Q$.
After a suitable quadratic twist with $y(P)$, we get a new rational elliptic curve $E$ with a rational 3-torsion point.
The 2-torsion point and 18-isogeny are still rational.
Thus $E$ has an 18-isogeny $\phi$ whose kernel ${\rm ker}(\phi)$ contains a rational 6-torsion point.
Let $Q\in{\rm ker}(\phi)$ be a point of order 18 and put $K=\Q(x(Q))$.
If $y(Q)$ is not defined over $K$, then there exists an element $\sigma\in{\rm Gal}(\overline{K}/K)$ which maps $Q$ to $-Q$.
Since $P=6Q$, $\sigma$ maps $P$ to $-P$ which is impossible because $P$ is defined over $\Q$.
Thus  $K$ is actually equal to $\Q(Q)$.
Then the pairs $(E,\,Q)$ and $(E,\,\langle Q\rangle)$ correspond to a $K$-rational point on $X_1(18)$ and a rational point on $X_0(18)$, respectively.
Since the natural map $X_1(18)\to X_0(18)$ is a Galois covering of degree 3 and it maps $(E,\,Q)$ to $(E,\,\langle Q\rangle)$, $K$ should be a cyclic cubic field.
Thus we have a rational elliptic curve $E$ over a cyclic cubic field $K$ with $E(K)_{tors}=\Z/18\Z$.

Since $X_0(18)$ is a curve of genus 0 with a rational point, it has infinitely many rational points.
Thus there exist infinitely many rational elliptic curves $E$ over cyclic cubic fields $K$ so that $E(K)_{tors}=\Z/18\Z$.
For obtaining such an infinite family, we don't compute directly $18$-isogenies because it requires a big computation.
Instead, we use $9$-isogenies.
By the computation in \cite{CW}, we have a family of elliptic curves $E_{t,U}$ with 9-isogenies as follows:

\begin{equation}\label{eq:9}
E_{t,U}:\ \ y^2 =x^3 + A_{9}(t,U)x + B_{9}(t,U),
\end{equation}
where 
\begin{align*}
A_{9}(t,U)&=-2187 (t+1)^3 (9 t^3+27 t^2+27 t+1) U^2,\\ 
B_{9}(t,U)&=-39366 (t+1)^3 (27 t^6+162 t^5+405 t^4+504 t^3+297 t^2+54 t-1) U^3.
\end{align*}

First, we will find a family of elliptic curves $E_s$ with 9-isogenies and underlying rational 3-torsion by choosing appropriate $U$.
Using {\sc Magma}, we obtain that a linear factor of the 3-division polynomial of $E_{t,1}$ is $x + 81t^3 + 243t^2 + 243t + 81$.
By putting $x=-81t^3-243t^2-243t-81$ in \eqref{eq:9} with $U=1$, we have that the square of the $y$-coordinate of a 3-torsion point is equal to $-2^43^9(t+1)^3$.
Now we put $t=s$ and $U=-3(s+1)$ in \eqref{eq:9}, then up to a $\Q(s)$-rational isomorphism $E_{t,U}$ becomes the following:

$ E_s:\ \ y^2 =x^3 +A(s)x + B(s),$
where 
\begin{align}\label{eq:9-2}
A(s)&=-3(s+1)(9s^3+27s^2+27s+1),\\ \nonumber
B(s)&=2(27s^6+162s^5+405s^4+504s^3+297s^2+54s-1),
\end{align}
which has a rational 3-torsion point for any rational number $s\neq -1$.

Using {\sc Magma} we compute that a cubic factor of the 9-division polynomial of $E_s$ is given by
\begin{align*}
F_s(x):=&x^3+(-9s^2 - 30s - 33)x^2 + (27s^4 + 180s^3 + 450s^2 + 516s + 219)x\\ 
   &-27s^6 - 270s^5 - 1053s^4 - 2196s^3 - 2565s^2 - 1566s - 323,\end{align*}
and its discriminant is given by $2^{12}3^{4}(s^2+3s+3)^2$, which is a perfect square.

Thus, for a rational number $s$ such that $F_s(x)$ is irreducible, $E_s$ has a 9-torsion point $P_s$ whose $x(P_s)$ is contained in a cyclic cubic field $K_s:=\Q(\alpha_s)$ where $\alpha_s$ is a root of $F_s(x)$.
Indeed, $P_s$ is defined over $K_s$, for otherwise there exists an element $\sigma\in{\rm Gal}(\overline{K_s}/K_s)$ which maps $P_s$ to $-P_s$, and then the 3-torsion point $3P_s$ maps to $-3P_s$ which is impossible because $3P_s$ is rational.

On the other hand, if $E_s$ has a rational 2-torsion point, then $E_s$ must have a rational 6-torsion point because $E_s$ has a rational 3-torsion point.
In general $f_s(x):=x^3+A(s)x+B(s)$ does not have a linear factor in $x$ over $\Q$.
However, substituting $s=s(u):=\frac{u^3-3u^2}{3u-3}$, $f_{s(u)}(x)$ splits into  a product of a linear factor and a quadratic factor over $\Q$.
Thus $E_{s(u)}$ has a rational 2-torsion point, hence we finally have an infinite family of elliptic curves $E_u$ over cyclic cubic fields $K_u$ so that $E_u(K_u)_{tors}=\Z/18\Z$ and $E_u(\Q)_{tors}=\Z/6\Z$ as follows:

$\bullet\, E_u:\ \ y^2 =x^3+A(u)x + B(u),$
where 
\begin{align*}
A(u)=&-27(u^3-3 u^2+3 u-3) (u^9-9 u^8+36 u^7-90 u^6+162 u^5\\
&-216 u^4+192 u^3-90 u^2+9 u-3),\\
B(u)=&54(u^6-6 u^5+15 u^4-24 u^3+27 u^2-18 u-3)(u^{12}-12 u^{11}\\
&+66 u^{10}-228 u^9+567 u^8-1080 u^7+1596 u^6-1800 u^5\\
&+1503 u^4-900 u^3+378 u^2-108 u+9).
\end{align*}

$\bullet\, K_u=\Q(\alpha_u)$ where $\alpha_u$ is a root of an irreducible polynomial $a_3(u)x^3+a_2(u)x^2 +a_1(u)x+a_0(u)$ for some rational number $u$ where
\begin{align*}
a_3(u)=&27(u-1)^6,\\
a_2(u)=&-27 (u-1)^4 (u^6-6 u^5+19 u^4-40 u^3+63 u^2-66 u+33),\\
a_1(u)=&9(u-1)^2 (u^3-3 u^2+3 u-3) (u^9-9 u^8+44 u^7-146 u^6+354 u^5-648 u^4\\
&+912 u^3-954 u^2+657 u-219),\\
a_0(u)=&-u^{18}+18 u^{17}-165 u^{16}+1020 u^{15}-4716 u^{14}+17172 u^{13}-50904 u^{12}\\
&+125820 u^{11}-263358 u^{10}+470376 u^9-718146 u^8+934740 u^7-1028268 u^6\\
&+939276 u^5-693360 u^4+399924 u^3-173097 u^2+52326 u-8721.
\end{align*}

Next consider the case $E(\Q)=\Z/9\Z$.
This case can be treated by the exact same method as $E(K)_{tors}=\Z/14\Z$ with $E(\Q)_{tors}=\Z/7\Z$.
As in that case, $K$ is not a cyclic cubic field.
One can find the parametrization of $E_u$ with $E_u(\Q)_{tors}=\Z/9\Z$ in \cite[Table 3]{Ku} as follows;

$\bullet\ \  E_u:\,y^2-(u^3-u^2-1)xy-u^2(u-1)(u^2-u+1)y=x^3-u^2(u-1)(u^2-u+1)x^2.$

\noindent with discriminant $\Delta_u= u^9(u-1)^9(u^2-u+1)^3(u^3-6u^2+3u+1)\neq 0$.
Note that $E_u$ is isomorphic to the elliptic curve defined by
\begin{align*}
y^2=f_u(x):=&x^3-\frac{1}{3}(u^3-3u^2+1)(u^9-9u^8+27u^7-48u^6+54u^5-45u^4+27u^3\\
&-9u^2+1)x+\frac{2}{27}(u^{18}-18 u^{17}+135 u^{16}-570 u^{15}+1557 u^{14}\\
&-2970 u^{13}+4128 u^{12}-4230 u^{11}+3240 u^{10}-2032 u^9+1359 u^8\\
&-1080 u^7+735 u^6-306 u^5+27 u^4+42 u^3-18 u^2+1).
\end{align*}
Let $\alpha_u$ be a root of an irreducible polynomial $f_u(x)$ for some rational number $u$.
Then $K_u:=\Q(\alpha_u)$ is a cubic field, so that $E_u(K_u)_{tors}=\Z/18\Z$.
Let $r_1,r_2,r_3$ be the three real roots of $u^3-6u^2+3u+1=0$ with $r_1<r_2<r_3$, then $r_1<0<r_2<1<r_3$.
Note that $u^2-u+1=0$ has no real root.
Put $I:=(-\infty,\,r_1)\cup (0,\,r_2) \cup (1,\, r_3)$, $J:=(r_1,\,0)\cup (r_2,\,1) \cup (r_3,\,\infty)$.
Then $\Delta_u<0$ for $u \in I$ and $\Delta_u>0$ for $u \in J$, hence $E_u$ has torsion $\Z/18\Z$ over 
complex cubic field $K_u$ when $u\in I\cap\Q$ and over totally real, but non-Galois cubic fields $K_u$ when $u\in J\cap\Q$.

Finally, we consider the torsion of $E_u$ over pure cubic fields.
Suppose $u\in I\cap\Q$. Then $K_u$ can be a pure cubic field only if 
\begin{equation}\label{eq:de2}
-27k^2=\Delta_u=u^9(u-1)^9(u^2-u+1)^3(u^3-6u^2+3u+1)
\end{equation}
for some $k\in\Q$ by Lemma \ref{lem:pure}.
By letting $v=\frac{9k}{u^4(u-1)^4(u^2-u+1)}$ in \eqref{eq:de2}, we have the following equation:
$$v^2=-3u(u-1)(u^2-u+1)(u^3-6u^2+3u+1),$$
which defines a hyperelliptic curve $C$ of genus 3.
Since the Chabauty method is implemented in {\sc Magma} only for a curve of genus 2, we use another method to find all the rational points on $C$.
By using {\sc Magma}, we compute that the group ${\rm Aut}_\Q(C)$ of rational automorphisms is of order 6, and it has an automorphism $\sigma$ of order 3 as follows:
$$\sigma(u,v)=\left(\frac{u^4-u^3}{u^4},\,\frac{v}{u^4}\right).$$
Then the quotient curve $C/\langle\sigma\rangle$ is an elliptic curve $E$ defined by
$$y^2=x^3 + 1,$$
and the map $\phi: C\to E$ of degree 3 is given by
$$(x,y)=\phi(u,v)=\left(-\frac{u^3-3u^2+1}{3u(u-1)},\,\frac{v(u^2-u+1)}{9u^2(u-1)^2}\right).$$
Note that $E(\Q)=\{O, (-1,0), (0,\pm 1), (2,\pm 3)\}$ is of order 6.
Moreover, $C$ has three obvious rational points $(0,\,0),\,(1,\,0)$ and $\infty$ which are lying above $O$.
Using the map $\phi$, we can compute explicitly the points on $C$ lying above non-trivial points of $E(\Q)$.
They turn out to be not rational, hence $C(\Q)=\{(0,\,0),\,(1,\,0),\infty\}$.
However, these three points cannot give pure cubic fields.
Therefore, the torsion $\Z/18\Z$ cannot occur over pure cubic fields.

\begin{thm} Suppose $E$ is a rational elliptic curve with $E(K)_{tors}=\Z/18\Z$ over some cubic field $K$.
\begin{itemize}
\item[(a)] If $E(\Q)_{tors}=\Z/6\Z$, then there exist infinitely many non-isomorphic rational elliptic curves $E$ over cyclic cubic fields $K$ so that $E(K)_{tors}=\Z/18\Z$.
\item[(b)] If $E(\Q)_{tors}=\Z/9\Z$, then there exist infinitely many non-isomorphic rational elliptic curves $E$ over both non-Galois totally real cubic and complex cubic fields $K$ so that $E(K)_{tors}=\Z/18\Z$.
But there is no rational elliptic curve $E$ such that $E(K)_{tors}=\Z/18\Z$ over a pure cubic field $K$.
\end{itemize}
\end{thm}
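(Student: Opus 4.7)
By \cite[Theorem 1.2]{GNT} the growth $E(K)_{tors} = \Z/18\Z$ forces $E(\Q)_{tors} \in \{\Z/6\Z, \Z/9\Z\}$, so the plan is to split into these two cases. For part (a), Lemma \ref{tor18} immediately gives that $K$ must be cyclic when $E(\Q)_{tors} = \Z/6\Z$, so what remains is to construct an infinite family. I would start from the parametric family $E_{t,U}$ of rational elliptic curves with a $\Q$-rational $9$-isogeny from \cite{CW}, since $X_0(9)$ is much easier to work with than $X_0(18)$ directly. Then I would specialize in two stages: first, by plugging the $\Q$-rational root of the $3$-division polynomial into the Weierstrass equation one reads off the twist parameter $U = -3(s+1)$ that promotes the underlying $3$-torsion to a $\Q$-rational point; second, by choosing the substitution $s = (u^3-3u^2)/(3u-3)$ one makes $x^3 + A(s)x + B(s)$ split off a linear factor over $\Q$, providing a rational $2$-torsion. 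The final verification is that a cubic factor of the $9$-division polynomial of the resulting curve $E_u$ has discriminant equal to a perfect square ($2^{12} 3^4 (s^2+3s+3)^2$), so whenever this cubic factor is irreducible the field $K_u = \Q(\alpha_u)$ is cyclic cubic, the $9$-torsion point above the rational $3$-torsion is $K_u$-rational, and together with the rational $2$-torsion this gives $E_u(K_u)_{tors} = \Z/18\Z$. Infinitude then follows because the $j$-invariant varies non-trivially with $u$.

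For part (b), by \cite[Theorem 1.2]{GNT} again, $E(\Q)_{tors} = \Z/9\Z$ forces $K$ to be non-Galois. The plan is to use Kubert's parametrization of rational curves with a $\Q$-rational $9$-torsion from \cite[Table 3]{Ku}, move to short Weierstrass form $y^2 = f_u(x)$, and set $K_u = \Q(\alpha_u)$ where $\alpha_u$ is a root of the cubic $f_u$. Then $E_u(K_u)_{tors}$ contains both the $9$-torsion and the $2$-torsion point $(\alpha_u, 0)$, so it contains $\Z/18\Z$, and Najman's classification \eqref{eq:ct} combined with non-Galoisness forces equality. To separate the totally real non-Galois and complex cubic cases I would analyze the sign of the discriminant $\Delta_u = u^9(u-1)^9(u^2-u+1)^3(u^3-6u^2+3u+1)$: its real cubic factor has three real roots $r_1 < 0 < r_2 < 1 < r_3$, and these together with $0$ and $1$ partition the relevant values of $u$ into two unions $I$ and $J$ of open intervals on which $\Delta_u$ has constant sign, each clearly containing infinitely many rationals, yielding infinite families of both complex and totally real non-Galois cubic extensions.

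The delicate step is excluding pure cubic fields. By Lemma \ref{lem:pure} the field discriminant of a pure cubic has the form $-27 d^2$, and since the field and polynomial discriminants differ by a rational square, one obtains the necessary condition $\Delta_u = -27 k^2$ for some $k \in \Q$. The substitution $v = 9k/[u^4(u-1)^4(u^2-u+1)]$ then transforms this into the genus-$3$ hyperelliptic curve $C: v^2 = -3u(u-1)(u^2-u+1)(u^3-6u^2+3u+1)$. Since \textsc{Magma}'s Chabauty routine handles only genus $2$, the plan is to exploit extra symmetry: I would compute $\mathrm{Aut}_{\Q}(C)$, identify an order-$3$ automorphism $\sigma(u,v) = ((u-1)/u,\, v/u^4)$, and form the quotient map $\phi: C \to C/\langle\sigma\rangle$. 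Recognizing the quotient as the rank-$0$ elliptic curve $E: y^2 = x^3 + 1$ with $|E(\Q)| = 6$ reduces the problem to a finite check: pull each rational point of $E$ back along the degree-$3$ map $\phi$ and verify rationality of the preimages, leaving only $C(\Q) = \{(0,0), (1,0), \infty\}$, none of which corresponds to a pure cubic specialization. The principal obstacle is precisely this last step: spotting the right order-$3$ automorphism and identifying the quotient as a rank-$0$ elliptic curve are the non-routine ingredients, after which the argument is mechanical.
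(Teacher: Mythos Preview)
Your proposal is correct and follows essentially the same route as the paper: for (a) you use Lemma~\ref{tor18}, the $9$-isogeny family from \cite{CW}, the twist $U=-3(s+1)$ to rationalize the $3$-torsion, the substitution $s=(u^3-3u^2)/(3u-3)$ to gain a rational $2$-torsion, and the perfect-square discriminant check; for (b) you use Kubert's $\Z/9\Z$ family, the sign analysis of $\Delta_u$, and for the pure-cubic exclusion the same genus-$3$ curve $C$, the order-$3$ automorphism (your $(u-1)/u$ is just the paper's $(u^4-u^3)/u^4$ simplified), and the quotient to $y^2=x^3+1$. The only minor imprecision is the attribution of ``$K$ non-Galois'' directly to \cite[Theorem 1.2]{GNT}; the actual argument is that a Galois $K$ would force full $2$-torsion and hence $\Z/2\Z\times\Z/18\Z$, which is excluded by Najman's list \eqref{eq:ct}---but this does not affect the correctness of your outline.
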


\begin{center}
2.4.\, $E(K)_{tors}=\Z/2\Z\times\Z/14\Z$
\end{center}

Bruin and Najman \cite{BN} proved the following result:
\begin{thm}\label{thm:2-14} \cite[Theorem 1.2]{BN} 
If $E$ is an elliptic curve over a cubic field $K$ with torsion subgroup isomorphic to $\Z/2\Z\times \Z/14\Z$,
then $K$ is cyclic over $\Q$ and $E$ is a base change of an elliptic curve over $\Q$.
\end{thm}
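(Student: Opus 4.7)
I would work with the modular curve $X := X_1(2,14)$ whose $K$-rational points parametrize triples $(E, P_{14}, P_2)$ with $P_{14}$ of exact order $14$ and $P_2 \in E[2]$ independent of $7 P_{14}$. An elliptic curve $E$ over a cubic field $K$ with $\Z/2\Z\times\Z/14\Z \subset E(K)$ then yields a non-cuspidal $K$-rational point on $X$, so the theorem amounts to classifying the non-cuspidal cubic points of $X$.

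First I would work out the geometry. The curve $X$ sits in a tower $X \to X_1(14) \to X_0(14)$ of degrees $2$ and $3$, where both $X_1(14)$ and $X_0(14)$ are of genus $1$; a Riemann--Hurwitz computation gives $g(X) \geq 2$, so Faltings' theorem ensures $|X(\Q)| < \infty$. Combined with Mazur's classification (no rational elliptic curve has torsion $\Z/2\Z\times\Z/14\Z$), this forces $X(\Q)$ to consist only of cusps. To control cubic points I would carry out a symmetric-cube Chabauty analysis: a $\mathrm{Gal}(\overline{\Q}/\Q)$-stable degree-$3$ divisor on $X$ corresponds to a $\Q$-rational point of $\mathrm{Sym}^3 X$, whose Abel--Jacobi image lies in $J(X)(\Q)$. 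The core arithmetic input is that $J(X)(\Q)$ has rank $0$, to be verified by decomposing $J(X)$ into isogeny factors via modular symbols and running two-descent on each factor; granted this, only finitely many Abel--Jacobi fibers can arise, and each can be analyzed directly.

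The anticipated outcome is that every non-cuspidal cubic point on $X$ is a Galois orbit of size $3$ sitting above a $\Q$-rational point of $X_0(14)$. This would yield both halves of the theorem at once. The underlying $\Q$-point of $X_0(14)$ corresponds to a rational elliptic curve $E_0/\Q$ equipped with a $\Q$-rational cyclic $14$-isogeny, and matching the $K$-rational torsion identifies $E$ with a specific twist of $E_0$; the quadratic-twist ambiguity is pinned down by the requirement of full $2$-torsion over $K$, giving $E \simeq E_0\otimes_{\Q} K$. For the cyclicity of $K/\Q$, the Galois group $\mathrm{Gal}(\overline{\Q}/\Q)$ acts on the $\Q$-rational cyclic subgroup $\langle P_7\rangle\subset E_0[7]$ through a character $\chi: G_{\Q} \to (\Z/7\Z)^\times \cong \Z/6\Z$; the requirement that $P_7$ becomes $K$-rational over a cubic extension forces $\mathrm{im}(\chi)$ to be the unique subgroup of order $3$, so $K = \overline{\Q}^{\ker\chi}$ is a cyclic cubic field.

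The main obstacle is the explicit Mordell--Weil computation for $J(X_1(2,14))$ over $\Q$ and the ensuing symmetric-cube Chabauty sieve, which in practice requires a substantial {\sc Magma} calculation (decomposition into newform factors of level dividing $14$, two-descent on each, and a Mordell--Weil sieve on $\mathrm{Sym}^3 X$). One could hope to lighten the load by exploiting the Atkin--Lehner involutions to pass to quotients of $X$ of smaller genus whose arithmetic is already documented; but once the cubic points are confirmed to lie above $X_0(14)(\Q)$, the descent and the Galois-representation reduction in the previous paragraph are then formal.
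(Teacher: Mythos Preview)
First, note that the paper does not itself prove this theorem: it is quoted from Bruin and Najman \cite{BN} and used as a black box, so there is no in-paper argument to compare against.

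Your plan contains a genuine error in its anticipated outcome. You expect that after the symmetric-cube Chabauty analysis every non-cuspidal cubic point of $X=X_1(2,14)$ will turn out to lie over a point of $X_0(14)(\Q)$. But $X_0(14)$ is an elliptic curve of rank $0$ with exactly six rational points (four cusps and the two CM points discussed in Subsection~2.2), so this would account for only finitely many cubic points on $X$. In reality there are infinitely many: Subsection~2.4 of this very paper, building on \cite{JKL1,J}, exhibits a one-parameter family of non-cuspidal cubic points on $X$ with infinitely many distinct $j$-invariants. Consequently your descent step (``$E$ is a twist of the rational curve underlying the point of $X_0(14)(\Q)$'') has nothing to stand on, and your character argument for the cyclicity of $K$ already presupposes that $E$ is defined over $\Q$.

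Concretely, what you are missing in the $\mathrm{Sym}^3 X$ picture is that one of the finitely many Abel--Jacobi fibers is positive-dimensional: $X_1(2,14)$ carries a rational $g^1_3$, and the associated trigonal pencil contributes an entire $\PP^1$ of rational effective degree-$3$ divisors. The real content (as carried out in \cite{BN}) is to show that every non-cuspidal cubic point lies in a fiber of this trigonal map, and that this map is Galois with group $\Z/3\Z$ coming from the diamond action of $(\Z/14\Z)^\times/\{\pm1\}$. Then fibers over rational base points are $\Z/3\Z$-orbits, forcing $K$ to be cyclic, and the base curve parametrizes rational elliptic curves with the requisite level structure, giving the base-change statement. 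Your outline can be salvaged along these lines, but not with the outcome you wrote down.
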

Also, they suggested a method to find a rational model of $E$ from a model over a cyclic cubic field in \cite[Remark 3.10]{BN} as follows;
Given an elliptic curve $E$ over a cubic field $K$ with $E(K)_{tors}=\Z/2\Z\times\Z/14\Z$, if we choose a point $P$ of order $7$ in $E(K)_{tors}$, and write down the unique (long) Weierstrass equation for $E$ such that the points $P$, $2P$ and $4P$ lie on the line $y = 0$ and the points $3P$, $5P$ and $6P$ lie on the line $y =-x$, then this Weierstrass equation has coefficients in $\Q$.
On the other hand, the first author, Kim and Lee \cite{JKL1} provided an infinite family of elliptic curves $E_t$ over cyclic cubic fields $K_t$ with $E_t(K_t)_{tors}=\Z/2\Z\times\Z/14\Z$.
Indeed, there is a typo in the family of \cite{JKL1}, and the first author corrected it in \cite{J}.
By using the method from \cite[Remark 3.10]{BN}, let us find an infinite family of rational elliptic curves $E_u$ over cyclic cubic fields $K_u$ with $E_u(K_u)_{tors}=\Z/2\Z\times\Z/14\Z$.

Firstly, let us write down the family of \cite{JKL1} in a short Weierstrass form, say,
\begin{equation}\label{eq:2-14}
E_t:\ \ y^2=x^3+A(t)x+B(t).
\end{equation}
Here we don't present the coefficients because they are huge and complicated.
The reason we use a short Weierstrass form is that the computer algebra systems could not find a point of order 7 from the given equation in \cite{JKL1}, but we don't know why.

Secondly, using {\sc Magma}, we compute a linear factor of the 7-division polynomial, then using {\sc Maple} we compute a 7-torsion point $P=(x_1,\,y_1)$ as follows:

{\tiny \begin{align*}
x_1=&-\{(t-1)(t+1)^3(t^{11}+5t^{10}+7t^9-53t^8-150t^7+178t^6+1422t^5-906t^4-379t^3-10823t^2+22651t-14001)\alpha_t^2\\
&+(t+1)^2(t^{14}+6t^{13}+7t^{12}-84t^{11}-271t^{10}+330t^9+2879t^8+168t^7-12821t^6-9926t^5+19677t^4+96236t^3\\
&-174941t^2+68918t+26205)\alpha_t-(t^{16}+13t^{15}+63t^{14}+69t^{13}-549t^{12}-1919t^{11}+1227t^{10}+15593t^9+13329t^8\\
&-49369t^7-81699t^6+79599t^5+234489t^4-166773t^3-202663t^2+90019t+134106)\}/\{6144(t-1)^2(t^3+t^2-9t-1)^2\}\\
y_1=&-\{(t-1)(t+1)^3(t^{11}+5t^{10}+7t^9-45t^8-150t^7+114t^6+782t^5+390t^4-2427t^3-1223t^2+1787t+2807)\alpha_t^2\\
&+(t+1)^2(t^{14}+6t^{13}+7t^{12}-76t^{11}-263t^{10}+226t^9+2135t^8+760t^7-7621t^6-9622t^5+19213t^4+18452t^3\\
&-8501t^2-26130t-4971)\alpha_t-(t^{14}+11t^{13}+40t^{12}-14t^{11}-497t^{10}-847t^9+2218t^8+6764t^7-4225t^6\\
&-23171t^5-2172t^4+43778t^3+14481t^2-26521t-26230)(t+1)^2\}/\{256(t-1)(t^3+t^2-9t-1)^3\}
\end{align*}}
where $\alpha_t$ is a root of the following cubic equation:
\begin{equation}\label{eq:defining}
f(x,t)=(t^2-1)x^3+(t^3+2t^2-9t-2)x^2-9(t^2-1)x-t^3-2t^2+9t+2.
\end{equation}
Put $mP=(x_m,\,y_m)$ for $m=1,2,3,4,5,6$.
Let $L_1$(resp. $L_2$) be the line through $P$, $2P$ and $4P$(resp. $3P$, $5P$ and $6P$), and
let $Q=(x_0,\,y_0)$ denote the intersection point of $L_1$ and $L_2$, actually, $y_0=0$.

Thirdly, using {\sc Maple}, we find a change of variables to bring $E_t$ in the form described as above by solving the following system of equations:
\begin{align}\label{eq:change}\nonumber
&p^3y_0+p^2qx_0+s=0,\\ \nonumber
&p^2x_0+r=0,\\ \nonumber
&p^3y_1+p^2qx_1+s=0,\\
&p^3y_2+p^2qx_2+s=0,\\ \nonumber
&-(p^2x_3+r)=p^3y_3+p^2qx_3+s,\\ \nonumber
&-(p^2x_5+r)=p^3y_5+p^2qx_5+s. \nonumber
\end{align}
Here the first two equations mean that $Q$ maps to $(0,\,0)$, the next two equations mean that $P$, $2P$ and $4P$ lie on the line $y=0$, and the last two equations mean that $3P$, $5P$ and $6P$ lie on the line $y=-x$.
Using {\sc Maple}, we obtain the following:
{\tiny \begin{align*}
p=&\{(t-1)(t+1)(t^5+t^4-6t^3-46t^2+53t+29)\alpha_t^2+(t^8+4t^7-4t^6-60t^5-42t^4+492t^3-228t^2-308t-111)\alpha_t\\
&+(t+1)(t^7-2t^6+t^5-58t^4+259t^3-318t^2-5t-6)\}/\{2(t^2+3)(t^6+4t^5+13t^4-40t^3+19t^2+36t+31)\},\\
q=&-1/2,\\
r=&-(t^{12}+4t^{11}+6t^{10}-36t^9-45t^8+168t^7+804t^6-1608t^5+855t^4+788t^3+2166t^2+684t+309)/\\
&\{12(t^2+3)^2(t^8+4t^7+16t^6-28t^5+58t^4-84t^3+88t^2+108t+93)\},\\
s=&(t^{12}+4t^{11}+6t^{10}-36t^9-45t^8+168t^7+804t^6-1608t^5+855t^4+788t^3+2166t^2+684t+309)/\\
&\{24(t^2+3)^2(t^8+4t^7+16t^6-28t^5+58t^4-84t^3+88t^2+108t+93)\}.
\end{align*}}

Lastly, letting $t=u$ and using this change of variables, we obtain an infinite family of rational elliptic curves $E_u$ over cyclic cubic fields $K_u$ with $E_u(K_u)_{tors}=\Z/2\Z\times\Z/14\Z$ as follows:

$\bullet\, E_u:\ \  y^2+xy=x^3+A_2(u)x^2+A_4(u)x+A_6(u)$ where
\begin{align*}
A_2(u)=&\frac{-4(u^6+2u^5+15u^4-20u^3+15u^2+18u+33)(u-1)^2(u+1)^2}{(u^2+3)^3(u^6+4u^5+13u^4-40u^3+19u^2+36u+31)},\\
A_4(u)=&\frac{64(u^6+2u^5+3u^4-20u^3+39u^2+18u+21)(u-1)^6(u+1)^6}{(u^2+3)^6(u^6+4u^5+13u^4-40u^3+19u^2+36u+31)^2},\\
A_6(u)=&\frac{4096(u-1)^{12}(u+1)^{12}}{(u^6+4u^5+13u^4-40u^3+19u^2+36u+31)^3(u^2+3)^9},
\end{align*}

or transformed into short Weierstrass form

$\bullet\, E_u:\ \  y^2=x^3+A(u)x+B(u)$ where
\begin{align*}
A(u)=&-(u^{12}+4u^{11}-10u^{10}-68u^9+3u^8+552u^7+4u^6-2568u^5+2103u^4\\
&+1684u^3+1958u^2+396u+37)/\{48(u^2+3)^3(u^6+4u^5+13u^4-40u^3\\
&+19u^2+36u+31)\},\\
B(u)=&(u^{24}+8 u^{23}+12 u^{22}-120 u^{21}-518 u^{20}+504 u^{19}+5068 u^{18}+568 u^{17}\\
&-24009 u^{16}-15024 u^{15}+62936 u^{14}+183120 u^{13}-550452 u^{12}-851984 u^{11}\\
&+4384056 u^{10}-3808912 u^9+1467519 u^8-4083672 u^7+3590300 u^6\\
&+5512360 u^5+6945498 u^4+2943128 u^3+893052 u^2+120024 u\\
&+3753)/\{864(u^2+3)^6(u^6+4u^5+13u^4-40u^3+19u^2+36u+31)^2\}.
\end{align*}

$\bullet\, K_u=\Q(\alpha_u)$ where $\alpha_u$ is a root of the irreducible polynomial $f(x,u)$ given in \eqref{eq:defining} for some rational number $u$.

By this result together with Theorem \ref{thm:2-14}, we have the following result:

\begin{thm} Suppose $E$ is a rational elliptic curve with $E(K)_{tors}=\Z/2\Z\times\Z/14\Z$ for some cubic field $K$.
Then $K$ is a cyclic cubic field.
Moreover, there exist infinitely many non-isomorphic rational elliptic curves $E$ and cyclic cubic fields $K$ so that $E(K)_{tors}=\Z/2\Z\times\Z/14\Z$.
\end{thm}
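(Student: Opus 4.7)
The first assertion is immediate from Theorem \ref{thm:2-14}, so the real content is to exhibit infinitely many non-isomorphic pairs $(E,K)$ with $E/\Q$, $K$ cyclic cubic, and $E(K)_{tors}\cong\Z/2\Z\times\Z/14\Z$. My plan is to start from the known $1$-parameter family over cyclic cubic fields constructed in \cite{JKL1} (with the correction in \cite{J}), and explicitly descend each member to $\Q$ by implementing the normalization recipe of Bruin--Najman \cite[Remark 3.10]{BN}. Since Theorem \ref{thm:2-14} guarantees that each curve in that family is already a base change from $\Q$, the descent is not a question of existence but of computation.

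Concretely, I would first rewrite the JKL family in short Weierstrass form $E_t:\ y^2=x^3+A(t)x+B(t)$ over the cyclic cubic field $K_t=\Q(\alpha_t)$, where $\alpha_t$ satisfies a fixed cubic in $\Q(t)[x]$. The reason to pass to short Weierstrass form is purely computational: the direct JKL model seems to obstruct {\sc Magma}'s torsion routines, but in short form a linear factor of the $7$-division polynomial is easy to extract. Having $x_1=x(P)$ as a quadratic expression in $\alpha_t$, I would recover $y_1=y(P)$ and then $(x_m,y_m)=m\cdot P$ for $m=2,\dots,6$ by the standard addition formulas in {\sc Maple}.

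With all six points in hand, the Bruin--Najman normalization amounts to prescribing that $P,2P,4P$ be collinear on $y=0$ and $3P,5P,6P$ be collinear on $y=-x$. Letting $Q=(x_0,0)$ be the intersection of these two target lines, the affine change of variables $(x,y)\mapsto(p^2x+r,\,p^3y+p^2qx+s)$ must send $Q\mapsto (0,0)$, $\{P,2P,4P\}$ onto $y=0$, and $\{3P,5P,6P\}$ onto $y=-x$. This yields the linear system displayed in the paper (six equations, with two redundant after using the collinearities), and I would solve it over $\Q(t)(\alpha_t)$ for $p,q,r,s$. The transformed Weierstrass equation then has coefficients that are Galois-invariant expressions in $\alpha_t$, hence lie in $\Q(t)$, giving the desired rational family $E_u$.

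The main obstacle will be the combinatorial explosion of the expressions: even $x(P)$ is already a degree-two polynomial in $\alpha_t$ with coefficients that are high-degree polynomials in $t$, and the subsequent multiples, collinearities, and the solution of the $(p,q,r,s)$-system will compound this considerably. Careful use of the relation $f(\alpha_t,t)=0$ to reduce powers of $\alpha_t$, together with factorization of intermediate expressions by $(t^2+3)$, $(t^6+4t^5+13t^4-40t^3+19t^2+36u+31)$, etc., will be essential to get output of manageable size. Once the family $E_u/\Q$ is written down, infinitude of non-isomorphic curves is automatic: the $j$-invariant of $E_u$ is a non-constant rational function of $u$ (it varies already on the modular cover $X_1(\mathrm{torsion\ datum})$), so it takes infinitely many values as $u$ ranges over $\Q$, and each value is attained by only finitely many $u$. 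Combining the resulting family with Theorem \ref{thm:2-14} then yields the full statement.
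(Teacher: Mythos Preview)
Your proposal is correct and follows essentially the same approach as the paper: cite Theorem \ref{thm:2-14} for the cyclicity of $K$, start from the JKL family (with the correction in \cite{J}) rewritten in short Weierstrass form, extract a $7$-torsion point from a linear factor of the $7$-division polynomial, and solve the linear system for the change of variables implementing the Bruin--Najman normalization of \cite[Remark 3.10]{BN} to obtain a rational family $E_u$ over cyclic cubic fields $K_u$. The only small addition you make is the explicit remark that the $j$-invariant is non-constant in $u$, which the paper leaves implicit.
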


As a by-product of all results above, we have the following:

\begin{thm} Any rational elliptic curve does not gain a torsion group not occurring over $\Q$ when the base field is extended to a pure cubic field.
\end{thm}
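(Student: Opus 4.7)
The plan is to combine the results of Sections 2.1--2.4 with the elementary observation that a pure cubic field is never Galois over $\Q$. By comparing \eqref{eq:rt} and \eqref{eq:ct}, the only torsion groups that arise over a cubic field but not over $\Q$ are
\[
G \in \{\Z/13\Z,\ \Z/14\Z,\ \Z/18\Z,\ \Z/21\Z,\ \Z/2\Z \times \Z/14\Z\},
\]
so it suffices to rule out each of these five groups over pure cubic fields. Recall that $K = \Q(\sqrt[3]{n})$ contains only the real cube root of $n$, hence is not the splitting field of $x^3 - n$ and in particular is never cyclic cubic; this is the key elementary input.

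Three of the five groups are then handled immediately. Lemma \ref{lem:13} forces $K$ to be cyclic cubic when $G = \Z/13\Z$, Theorem \ref{thm:2-14} does the same when $G = \Z/2\Z \times \Z/14\Z$, and Najman's result recalled in the Introduction pins down $K = \Q(\zeta_9)^+$ when $G = \Z/21\Z$. None of these fields is pure cubic. For $G = \Z/14\Z$ and $G = \Z/18\Z$ I would split on $E(\Q)_{tors}$. When $E(\Q)_{tors} \cong \Z/2\Z$ (for $G = \Z/14\Z$) or $E(\Q)_{tors} \cong \Z/6\Z$ (for $G = \Z/18\Z$), the analyses of Sections 2.2 and 2.3 again force $K$ to be cyclic cubic and hence non-pure.

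The only remaining cases are $E(\Q)_{tors} \cong \Z/7\Z$ with $G = \Z/14\Z$ and $E(\Q)_{tors} \cong \Z/9\Z$ with $G = \Z/18\Z$. These are precisely the cases where the main work of the paper is required: combining Lemma \ref{lem:pure} with the parametrizations of the families $E_u$ from \cite[Table 3]{Ku} produces the hyperelliptic curves defined by \eqref{eq:de} and \eqref{eq:de2}, and the Chabauty computation (in genus $2$) together with the quotient argument via the order-$3$ automorphism $\sigma$ (in genus $3$) show that the only rational points on those curves are $(0,0)$, $(1,0)$ and $\infty$, none of which gives rise to a pure cubic field. The main obstacle is exactly the determination of those rational points, and it has already been overcome inside Sections 2.2 and 2.3, so the present theorem reduces to a collation of the case-by-case analysis carried out earlier.
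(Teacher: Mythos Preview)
Your proposal is correct and is exactly the collation the paper has in mind: the theorem is stated there as a ``by-product of all results above'' with no further argument, and you have faithfully spelled out how the five exceptional groups are eliminated by combining the cyclic-cubic constraints (Lemma~\ref{lem:13}, Lemma~\ref{tor18}, Theorem~\ref{thm:2-14}, and Najman's $\Z/21\Z$ result) with the hyperelliptic-curve computations in Sections~2.2 and~2.3 for the two remaining subcases.
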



\begin{center}
{\bf Acknowledgment}
\end{center}
The authors are indebted to the referee for finding a mistake/gap in the first version of the proof of Lemma \ref{tor18}.

\
\



\end{document}